\newtheorem{theorem}{Теорема}
\newtheorem{proposition}{Предложение}
\theoremstyle{definition}
\newtheorem{remark}{Замечание}
\newtheorem{definition}{Определение}
\newcommand{\eps}{\varepsilon}
\begin{document}
\date{}
\title{Эффект Джозефсона и быстро-медленные системы}
\author{
В.~А.~Клепцын\\CNRS, Institut de Recherche Math\'ematique de Rennes \\victor.kleptsyn@univ-rennes1.fr\\
О.~Л.~Ромаскевич\\механико-математический факультет МГУ\\olga.romaskevich@gmail.com\\
И.~В.~Щуров\\НИУ ВШЭ\\ilya@schurov.com
}
\maketitle
\begin{abstract}
In order to model the processes taking place in systems with Josephson contacts, a differential equation
on a torus with three parameters is used. One of the parameters of the system can be considered small
and the methods of the fast-slow systems theory can be applied. The properties of the phase-lock areas
– the subsets in the parameter space, in which the changing of a current doesn’t affect the voltage — are
important in practical applications. The phase-lock areas coincide with the Arnold tongues of a Poincare
map along the period. A description of the limit properties of Arnold tongues is given. It is shown that
the parameter space is split into certain areas, where the tongues have different geometrical structures
due to fast-slow effects. An efficient algorithm for the calculation of tongue borders is elaborated. The
statement concerning the asymptotic approximation of borders by Bessel functions is proven. The
presented results were obtained jointly with V. Kleptsyn, D. Filimonov, A. Klimenko and A. Glutsyuk

-----

Для моделирования процессов, происходящих в системах с джозефсоновскими контактами, используется дифференциальное уравнение на торе с тремя параметрами. Малость одного из них позволяет использовать методы теории быстро-медленных систем для изучения динамики этого уравнения. С точки зрения приложений, интерес представляют свойства зон захвата фазы -- областей в пространстве параметров, в которых изменение силы тока не меняет напряжение. Зоны захвата фазы совпадают с языками Арнольда для отображения Пуанкаре за период. Получено описание предельных свойств языков Арнольда. Показано, что пространство параметров разбивается на несколько областей, в которых языки имеют различную геометрию,
обусловленную быстро-медленными эффектами. Разработан эффективный алгоритм построения границ языков. Доказано утверждение об асимптотическом приближении границ функциями Бесселя при достаточно большом значении амплитуды тока смещения.
УДК 517.928.4, 517.925.42, 538.945
\end{abstract}

\textbf{Ключевые слова}: быстро-медленные системы, языки Арнольда, захват фазы, эффект Джозефсона
\section{Введение}
Мы изучаем дифференциальное уравнение с тремя вещественными параметрами, использующееся в физике сверхпроводников:

\begin{equation}\label{eq:Joseph}
\frac{dx}{d\tau}=\cos x +a + b \cos \mu \tau
\end{equation}
Сделаем замену времени $t=\mu\tau$.  Уравнение примет вид
\begin{equation}\label{eq:Joseph_tau}
\frac{dx}{dt}=\frac{\cos x +a + b \cos t}{\mu},
\end{equation}
эквивалетный системе
\begin{equation}\label{eq:Joseph_torus}
\left\{
\begin{array}{l}
x'=\cos x +a + b \cos t\\
t'=\mu\\
\end{array}
\right.
\end{equation}
где штрихом обозначается дифференцирование по $\tau$. Поскольку правая часть $2\pi$-периодична по $x$ и $t$, можно считать, что фазовым пространством системы~\eqref{eq:Joseph_torus} является двумерный тор~$\mathbb T^2=\mathbb R^2/(2\pi \mathbb Z^2)$.

Уравнения (\ref{eq:Joseph}-\ref{eq:Joseph_torus}) имеют одинаковые фазовые портреты и сводятся друг к другу заменой времени.

Уравнение $\eqref{eq:Joseph}$ и его обобщения активно используeтся для моделирования эффекта Джозефсона, который состоит в возможности появления сверхпроводящей компоненты для тока, протекающего через тонкий слой диэлектрика между двумя сверхпроводниками (такая конструкция называется джозефсоновским контактом). Впервые такой эффект был теоретически предсказан двадцатидвухлетним аспирантом Кембриджского университета Брайаном Джозефсоном в $1962$ году \cite{J62}, а через год был наблюден экспериментально американскими физиками П. Андерсоном и Дж. Роуэллом \cite{AR63}.

При рассмотрении джозефсоновского контакта при заданном внешнем контролируемом токе $i(t)$,  мы интересуемся зависимостью среднего напряжения $v$ от среднего тока $i$. График $v=v(i)$ называется \emph{вольт-амперной характеристикой}. Строго говоря, здесь $i$, $v$ суть безразмерные величины, отвечающие току и напряжению (переход к физическим величинам и более подробное физическое описание см. в \cite{T,LU,L}).

Экспериментально было обнаружено, что график вольт-амперной характеристики имеет ступеньки: на некотором начальном отрезке по $i$ выполнено $v=0$ (наблюдается сверхпроводимость), далее, после малого по $\mu$ промежутка на оси $i$ появляется новая ступень графика и так далее, сам график лежит под графиком параболы $v=\sqrt{i}$.
Ступеньки этого графика $\left\{i: v(i) = \mathrm{const}\right\}$ называются \emph{ступеньками Шапиро}. При этом ступеньки Шапиро в эксперименте проявляются лишь дискретно, при целых (с точностью до общей мультипликативной константы) значениях среднего напряжения.

Уравнение \eqref{eq:Joseph} представляет собой резистивную модель перехода Джозефсона с малой емкостью и синусоидальной токо-фазовой зависимостью (синусоидальным током смешения). В наших рассмотрениях после линейной замены координат ток смещения есть $I = a+ b \cos t$, параметр $a$ отвечает за среднее значение тока $i$, параметр $b$~--- за амплитуду. При этом переменная $x$ соответствует квантово-механической величине, характеризующей разность фаз. Производная  $\dot{x}$ есть уже макрофизическая величина~--- напряжение между пластинками сверхпроводников. Параметр $\mu$ играет роль отношения частоты внешнего сигнала к внутренней частоте контакта. Более строгое описание см. в \cite{MMI}.

Как было отмечено выше, физический интерес представляет зависимость среднего напряжения в течение больших промежутков времени от силы тока. В рассматриваемой модели среднее напряжение равно \emph{числу вращения} отображения Пуанкаре системы~\eqref{eq:Joseph_torus} с фиксированной пространственной окружности $t=0$ на себя. Нас интересует зависимость числа вращения от параметров $a$, $b$. Существование зон захвата фазы (языков Арнольда)~--- областей в пространстве параметров, при которых число вращения локально постоянно~--- является хорошо известным фактом теории отображений окружности. Оказывается, именно этот эффект соответствует физическому явлению~--- появлению ступенек Шапиро. Нас интересует геометрическая структура зон резонансного захвата.

Основные результаты, представленные в настоящей работе:
\begin{itemize}
	\item Доказано асимптотическое представление границ зон захвата фазы (языков Арнольда) при больших значениях амплитуды тока смещения.
	\item Объяснена связь между структурой языков при малых $\mu$ и геометрическими свойствами уравнения~\eqref{eq:Joseph} с точки зрения теории быстро-медленных систем.
	\item Построены области в пространстве параметров, «почти полностью» (за исключением экспоненциально узких по $\mu$ участков) занятые областями захвата при малых $\mu$.
	\item Приведен эффективный алгоритм вычисления границ языков для малых значений параметра $\mu$ (вплоть до $\mu$ порядка 0.01).
\end{itemize}

Структура работы такова. В разделе~\ref{ss:prop} приводятся известные факты об уравнениях~\eqref{eq:Joseph}, необходимые для дальнейшего. В разделе~\ref{sec:mainresults} приводятся и поясняются основные результаты работы.

\section{Число вращения и фазовый захват}\label{ss:prop}
В этом разделе мы приводим обзор известных результатов об уравнениях~\eqref{eq:Joseph}--\eqref{eq:Joseph_torus}.
\subsection{Число вращения уравнения на торе}
Мы напомним основные понятия теории уравнений на торе и отображений окружности, восходящей к Пуанкаре. Материал этого параграфа классический, см.~\cite{Bible,Arnold}.

Рассмотрим поле направлений на двумерном торе без особых точек. В подходящей системе координат оно записывается в виде
\begin{equation}\label{eq:anytorus}
	\frac{dx}{dt}=f(x,t),\quad (x,t)\in \mathbb T^2
\end{equation}
Заметим, что уравнение~\eqref{eq:Joseph_tau} является частным случаем уравнения~\eqref{eq:anytorus}. Для такого уравнения определено отображение Пуанкаре $P$ (отображение первого возвращения) с трансверсали $t=0$ на себя, совпадающее с отображением фазового потока за период. Пусть $\widetilde{P}$~--- поднятие $P$ на универсальную накрывающую.
\begin{definition}\label{def:rho}
	\emph{Числом вращения} системы~\eqref{eq:anytorus} (или его отображения Пуанкаре $P$) называется следующая величина:
\begin{equation}\label{eq:rho}
	\rho_P=\frac{1}{2\pi}\lim_{n \rightarrow \infty} \frac{\widetilde{P}^{n}(x_0)-x_0}{2 \pi n}=\frac{1}{2\pi}\lim_{t\to\infty}\frac{x(t)-x(0)}{t},
\end{equation}
где $x(t)$ задает некоторую фазовую траекторию уравнения~\eqref{eq:anytorus}, поднятую на универсальную накрывающую, $P^n$~--- $n$-я композиционная степень отображения $P$.
\end{definition}

Пуанкаре доказал, что число вращения всегда существует и не зависит от выбора стартовой точки $x_0$ (или траектории $x$). Оно характеризует средний наклон фазовых кривых уравнения~\eqref{eq:anytorus} или «средний угол поворота» для его отображения Пуанкаре. Для всякого отображения окружности, число вращения является важной характеристикой его динамических свойств. В частности, имеет место следующая дихотомия.

\begin{itemize}
	\item Число вращения $\rho_P$ диффеоморфизма $P$ \emph{рационально} ($\rho_P=\frac{p}{q}$) тогда и только тогда, когда у диффеоморфизма $P$ имеется периодическая орбита периода $q$
	\item При достаточной гладкости $P$ (а именно, $P \in C^{1+\varepsilon}$), число вращения $\rho_P$ иррационально тогда и только тогда, когда отображение $P$ в подходящих координатах является поворотом окружности на (иррациональный) угол $\rho_P$, то есть отображением $x\mapsto x+\rho_P \pmod 1$ (это классическая теорема Данжуа, см.~\cite{Denjoi,Bible}). В этом случае все его орбиты всюду плотны (динамика минимальна).
\end{itemize}

Пусть число вращения рационально и имеется периодическая орбита перида $q$. Это
означает, что для некоторого $x_0$
\begin{equation}\label{eq:eqx0}
	P^{q}(x_0)=x_0.
\end{equation}
Иными словами, точка $x_0$ является неподвижной точкой отображения $P^q$.
Производная $(P^q)'(x_0)$ называется \emph{мультипликатором} соответствующей
периодической точки. Пусть мультипликатор отличен от нуля (это требование
выполняется для \emph{типичного} отображения $P$ при подходящем определении
типичности). В этом случае по теореме о неявной функции, применяемой к
уравнению~\eqref{eq:eqx0}, у малого возмущения отображения $P$ также будет
периодическая точка периода $q$. Иными словами, в типичном случае периодические
орбиты сохраняются при малом шевелении. Отсюда нетрудно вывести, что число
вращения в этом случае также сохраняется.

Напротив, если число вращения достаточно гладкого отображения окружности
иррационально, и следовательно отображение сопряжено повороту, его число вращения можно изменить сколь
угодно малым возмущением. (Отображение поворота на угол $\alpha+\eps$ близко к
отображению поворота на угол $\alpha$ при малых $\eps$, но имеет число вращения
$\alpha+\eps\ne \alpha$ для $\eps\ne 0$.)

\subsection{Ступеньки Шапиро и области резонансного захвата}
Вернёмся к уравнению~\eqref{eq:Joseph_tau}. Пусть $P=P_{a, b, \mu}$~---
отображение Пуанкаре с трансверсали $t=0$ на
себя для \eqref{eq:Joseph_tau}. Заметим, что оно совпадает с отображением Пуанкаре для системы~\eqref{eq:Joseph_torus}, поскольку у них одинаковые фазовые портреты. Пусть в определении~\ref{def:rho} отображение $\widetilde{P}$~--- это поднятие~$P$
на универсальную накрывающую, непрерывно зависящее от параметров $a$, $b$,
$\mu$. В этом случае число вращения отображения $P$ также будет функцией от $a$,
$b$, $\mu$. Нетрудно видеть, что оно имеет физический смысл среднего напряжения $v$
за длительный промежуток времени. Действительно, если $\dot x$ есть напряжение в некоторый фиксированный момент времени, среднее напряжение за большой
промежуток времени для системы с джозефсоновским контактом есть $\frac{1}{T}
\int_0^T \dot{x}\,dt$, что при переходу к пределу и представляется числом вращения.

Зависимость числа вращения от параметра $a$ при фиксированном $b$ в этом случае совпадает с \emph{вольт-амперной характеристикой} джозефсоновского контакта. Как было показано выше, на графике этой зависимости могут возникать ступеньки в точках с рациональными числами вращения. Они действительно наблюдаются в экспериментах и называются~\emph{ступеньками Шапиро}.

\begin{definition}
\emph{Язык Арнольда} есть замкнутое множество в пространстве параметров с непустой внутренностью, являющееся множеством уровня $\rho_P$. Будем говорить, что происходит \emph{захват фазы} (\emph{резонансный захват}) для некоторого $\rho_P$, если существует соответствующий язык Арнольда.
\end{definition}

Впервые идея рассматривать области постоянного числа вращения в пространстве параметров для конечнопараметрических семейств отображений появилась у В.~И.~Арнольда~\cite{Arnold}. Он также заметил, что теорема Данжуа влечет отсутствие языков для иррациональных чисел вращения. В то же время, в типичном случае захват фазы происходит для всех рациональных чисел вращения.

Уравнение \eqref{eq:Joseph_tau} предоставляет парадоксальный с этой точки зрения пример семейства, в котором захват фазы наблюдается лишь для целых чисел вращения (при возмушении, однако, этот эффект пропадает~\cite{MMI}). Это свойство соответствует физическому эффекту появления ступенек Шапиро лишь при целочисленных значениях напряжения. В этом случае также число вращения равняется предельному среднему напряжению за 1 период, и последнее, соответствено, не меняется при малом изменении параметров (наблюдается «квантование числа вращения»~\cite{BKTq}, \cite{BKTq2}).

Математическое объяснение рождения языков только для целых чисел вращения приводится в следующем параграфе.

\subsection{Мёбиусовость и квантование числа вращения}
Зафиксируем некоторое $\mu >0$. Мы будем изучать структуру языков Арнольда на плоскости параметров $(a,b)$. Уравнение \eqref{eq:Joseph} сопряжено уравнению Риккати  с $2 \pi$ -периодическими коэффициентами посредством замены $u = \tan \frac{x}{2}$. Как известно, отображение Пуанкаре для уравнения Риккатти дробно-линейно \cite{E}. Соответственно, отображение Пуанкаре $P_{a,b, \mu}$ сопряжено дробно-линейному (мёбиусовому). Этот факт был замечен Футом в \cite{Foote} в связи с «велосипедной динамикой» и позже переоткрыт в \cite{BKTq,US} для уравнения, моделируюшего эффект Джозефсона. Ясно, что сопряжение не влияет на факт существования неподвижных точек и периодических орбит.

Нетождественное дробно-линейное отображение может иметь \emph{ноль}, \emph{одну} или \emph{две} неподвижных точки на вещественной оси (так как это решения квадратного уравнения $P(z)=z$). В этом случае оно называется соответственно \emph{эллиптическим}, \emph{параболическим} или \emph{гиперболическим}.

\begin{proposition}[\cite{BKTq, US}]\label{prop:only_integer}
Для уравнения \eqref{eq:Joseph_torus} захват фазы происходит лишь для $\rho_P \in \mathbb{Z}$.
\end{proposition}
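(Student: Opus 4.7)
The plan is to combine the Möbius structure of $P_{a,b,\mu}$ with real-analytic dependence on $(a,b)$ in order to turn any hypothetical non-integer phase lock into a rigid constraint on the trace, and then to falsify that constraint in the autonomous slice $b=0$.

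First I would recall (as already noted just before the statement) that the substitution $u=\tan(x/2)$ reduces \eqref{eq:Joseph_tau} to a Riccati equation in $u$ with $2\pi$-periodic coefficients in $t$; hence $P=P_{a,b,\mu}$ is an orientation-preserving Möbius transformation of the circle, representable by $M_{a,b,\mu}\in\mathrm{SL}_2(\mathbb{R})$ that depends real-analytically on $(a,b)$ for fixed $\mu>0$. Classify $P$ through the trace $\tau(a,b)=\mathrm{tr}\,M_{a,b,\mu}$: if $|\tau|\ge 2$ (hyperbolic or parabolic) then $P$ has a fixed point on $S^1$, so lifting gives $\rho_P\in\mathbb{Z}$; if $|\tau|<2$ (elliptic) then $P$ is conjugate in $\mathrm{PSL}_2(\mathbb{R})$ to a rigid rotation, and on this locus the fractional part of $\rho_P$ is a real-analytic, strictly monotone function of $\tau$ taking values in $(0,1)$. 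In particular, a non-integer rotation number forces $|\tau|<2$ and pins $\tau$ to a single value.

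Next I would argue by contradiction. If a phase-lock region $U\subset\mathbb{R}^2_{(a,b)}$ of non-empty interior carried a constant rotation number $\rho_0\notin\mathbb{Z}$, then on $U$ the analytic function $\tau$ would equal a constant $\tau_0$. Real-analyticity on the connected plane $\mathbb{R}^2_{(a,b)}$ would then propagate this equality and force $\tau\equiv\tau_0$ everywhere. To refute this global constancy I would restrict to $b=0$: the equation becomes autonomous, $\dot x=(\cos x+a)/\mu$, and for $a>1$ a direct quadrature yields $\rho_P=\sqrt{a^2-1}/\mu$, so both $\rho_P$ and $\tau$ vary non-trivially along this slice. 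Contradiction. The irrational case requires no modification of the argument, since the analyticity step does not use rationality of $\rho_0$.

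The main obstacle will be securing the two analytic ingredients: (i) real-analytic dependence of $M_{a,b,\mu}$ on $(a,b)$, which reduces to the standard analytic dependence of the fundamental solution of a linear system on parameters; and (ii) strict monotonicity of the trace-to-(fractional-rotation-number) map on the elliptic locus. Once these are in hand, the chain \emph{open phase lock} $\Rightarrow$ \emph{$\tau$ locally constant} $\Rightarrow$ \emph{$\tau$ globally constant by analyticity} $\Rightarrow$ \emph{contradiction at $b=0$} closes the argument, leaving only $\rho_P\in\mathbb{Z}$ as a possible value at the interior of a phase-lock region.
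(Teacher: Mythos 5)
Your argument is correct, but the decisive step is genuinely different from the paper's. Both proofs open identically: a non-integer rotation number rules out fixed points of the M\"{o}bius map $P_{a,b,\mu}$, so $P$ must be elliptic and hence conjugate to a rigid rotation. The paper then finishes locally and softly: it invokes the monotone dependence of $P$ on $a$ together with the remark recorded earlier in the paper (rotation by $\alpha+\eps$ versus rotation by $\alpha$) that a circle map conjugate to a rotation has its rotation number \emph{strictly} increased by any pointwise increase of the map; therefore $\rho$ cannot be locally constant at such a parameter, and the level set of a non-integer value has empty interior. You instead translate local constancy of $\rho$ into local constancy of the trace $\tau$, propagate that over the whole $(a,b)$-plane by real-analyticity of the monodromy, and refute it on the integrable slice $b=0$. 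This is sound: the monodromy of the associated trace-free linear system is entire in $(a,b)$, and on a connected open set a constant non-integer $\rho$ does pin $\tau$. (Your phrase ``strictly monotone function of $\tau$'' is a slight overstatement --- the trace determines the fractional part of $\rho$ only up to the sign ambiguity $\tau=\pm 2\cos(\pi\{\rho\})$ --- but you only use the implication ``$\rho$ constant $\Rightarrow$ $\tau$ constant,'' and continuity on a connected set resolves the sign, so this is not a gap.) As for what each approach buys: the paper's needs only continuity and monotonicity in $a$, no analytic continuation and no explicit quadrature, and it localizes the obstruction (strict growth of $\rho$ in $a$ at every elliptic parameter), which is the same mechanism reused in the boundary description of Section~\ref{sssec:sym}; yours trades that for heavier global machinery but produces a starker absurdity --- an open non-integer plateau would force $P$ to be elliptic with one and the same rotation angle over the entire parameter plane, which already fails at $b=0$, $|a|<1$, where equilibria of the field give $P$ hyperbolic fixed points and $|\tau|>2$, even before one needs your quadrature $\rho_P=\sqrt{a^2-1}/\mu$ for $a>1$.
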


\begin{proof}
	Действительно, если $\rho_{P}=\frac{p}{q} \notin \mathbb{Z}$, то отображение $P$ в подходящих мебиусовых координатах~--- эллиптическое. Рассматривая его как отображение единичнго круга, заменой координат его можно превратить в поворот.
Далее работают аналогичные случаю иррационального числа вращения рассуждения: число вращения поворота изменяется сколь угодно малым возмущением.
\end{proof}

Таким образом, языки Арнольда возникают лищь при $\rho \in \mathbb{Z}$. Точка $(a,b)$ лежит во внутренности языка Арнольда тогда и только тогда, когда соответствующее отображение $P_{a,b, \mu}$~--- гиперболическое. Точка $(a,b)$ лежит на границе языка Арнольда тогда и только тогда, когда соответствующее отображение параболическое (или тождественное). Действительно, границе языка Арнольда соответствует случай неподвижных точек, устраняющихся малым шевелением, то есть имеющих нулевую производную. Такие точки являются параболическими. При движении по кривой, начинающейся внутри языка Арнольда к его границе, действительные корни уравнения $P(z)=z$ сливаются в один корень, а при выходе из языка переходят в комплексно сопряженные точки.

\subsection{Симметрии и условия на границы}\label{sssec:sym}
Помимо мёбиусовсти ещё одним важным свойством уравнения
\eqref{eq:Joseph_torus} является обратимость динамики, то есть сохранение фазовых кривых при отображении
$(t,x) \mapsto (-t, -x)$.  Мебиусовость и центральная симметрия фазовых кривых
вместе дают аналитическое описание границ языков Арнольда в терминах
отображения Пуанкаре.

Действительно, пусть $P_{a,b, \mu} \neq \mathrm{id}$ и $(a,b)$ лежит на границе некоторого языка Арнольда при фиксированном $\mu$. Тогда $\rho_P \in \mathbb{Z}$ (см. предложение \ref{prop:only_integer}). В этом случае $P$ имеет неподвижную точку. Фазовые кривые сохраняются при центральной симметрии, и значит эта неподвижная точка отображения Пуанкаре должна переходить в неподвижную точку отображения Пуанкаре под действием симметрии $x\mapsto -x$ на окружности. Поскольку у параболического дробно-линейного отображения неподвижная точка единственна, она обязана переходить в себя. Значит, она обязана удовлетворять уравнению $x=-x$ на окружности. Существуют две точки, удовлетворяющие этому уравнению: $0$ и $\pi$. Таким образом, границы языка Арнольда с числом вращения $k \in \mathbb{Z}$ суть две аналитических кривые $a_{0,k}$ и $a_{\pi,k}$, задающиеся условиями

\begin{align*}
a=a_{0,k}(b) \Leftrightarrow P_{a,b, \mu}(0)=0 \\
a=a_{\pi,k}(b) \Leftrightarrow P_{a,b, \mu}(\pi)=\pi
\end{align*}

Численные эксперименты показывают (см. рис. \ref{intersections}), что границы одного языка Арнольда пересекаются друг с другом в счетном числе точек. Эти точки мы будем называть \emph{перемычками языков}. Также на картинке видно, что  перемычки одного языка лежат на одной вертикальной прямой $a=k \mu, k=\rho_P$. Существование счетного числа перемычек следует из результата о бесселевом приближении границ (см. параграф~\ref{subsection:Bessel} ниже, подробности в~\cite{ORK}). Целочисленность абсцисс перемычек была доказана совсем недавно в~\cite{Lesha}.

\begin{figure}
	\begin{center}
		\includegraphics[scale=0.11]{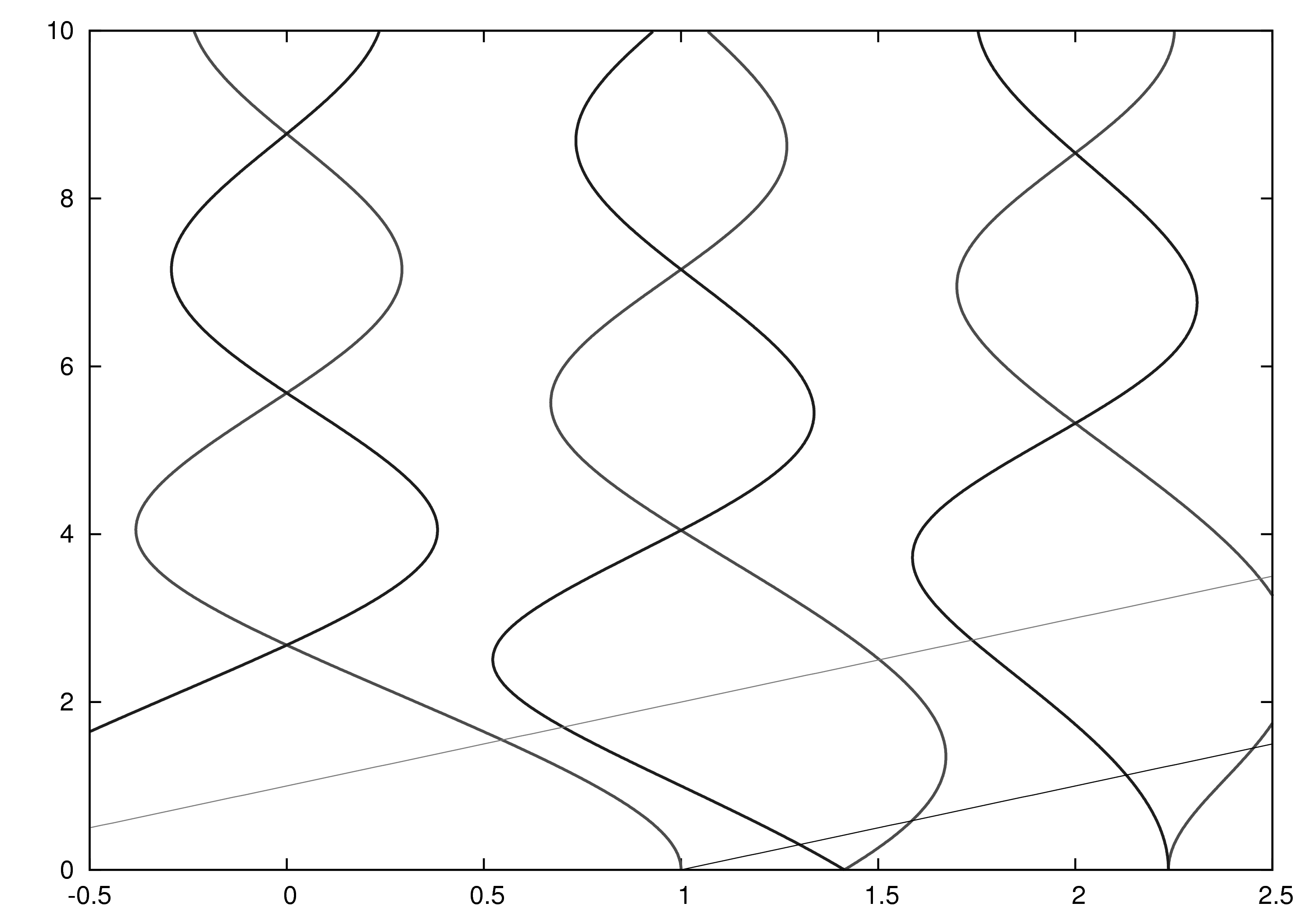}
        \caption{Здесь изображены языки Арнольда с номерами $0,1,2$ при $\mu=1$: видно, что языки обладают перемычками, лежащими на одной вертикали}\label{intersections}
	\end{center}
\end{figure}

\section{Основные результаты}\label{sec:mainresults}
\subsection{Асимптотическое приближение границ при больших амплитудах}\label{subsection:Bessel}
Напомним классическое определение целой бесселевой функции:
\begin{definition}
Функция Бесселя, соответствуюшая числу $k~\in~\mathbb{Z}$, это
$$J_k(b)=\frac{1}{2\pi} \int_0^{2\pi} \cos (kt-b \sin t) dt$$
\end{definition}

\begin{theorem}[\cite{ORK}]\label{My_Theorem}
Графики функций $a_{0,k}(b)$ и $a_{\pi,k}(b)$, определяющих границы $k$-го языка Арнольда для уравнения \eqref{eq:Joseph} в первом порядке аппроксимации ведут себя как бесселевские функции. Выполнено следующее:
\begin{equation*}
\begin{array}{l}
a_{0,k}- k \mu=-J_k \left(-\frac{b}{\mu}\right) + H_0(\mu) \times o(\frac{1}{\sqrt{b}}), b \rightarrow \infty\\
a_{\pi,k}- k \mu=J_k \left(-\frac{b}{\mu}\right) + H_{\pi}(\mu) \times o(\frac{1}{\sqrt{b}}), b \rightarrow \infty.
\end{array}
\end{equation*}
где $$H_{\varepsilon}(\mu) \leq \max \left(\nu^1_{\varepsilon}, \nu^2_{\varepsilon} \mu, \frac{\nu^3_{\varepsilon}}{\mu}, \frac{\nu^4_{\varepsilon}}{\mu^{2/3}}\right)$$
Здесь $\varepsilon=0,\pi$ and $\nu^i_\varepsilon, i=1,2,3,4$ -некоторые положительные константы и $o(\frac{1}{\sqrt{b}})$ не зависит от $\mu$.
\end{theorem}

Данные асимптотические приближения без оценки на остаточный член были ранее получены в~\cite{BKT-junc}, там же была сформулирована задача об оценке остаточного члена. Доказательство этого результата технически громоздко, и мы его не приводим здесь. Подробности см. в~\cite{ORK}.

\subsection{Быстро-медленные эффекты при малой частоте внешнего сигнала}
С помощью алгоритма, описанного в разделе \ref{method}, были получены диаграммы зон резонансного захвата для различных значений параметра $\mu$ (см. рис.\ref{pic020} и рис. \ref{pic010}). На них видно, что с уменьшением $\mu$ языки уменьшаются по ширине и приближаются друг к другу; при этом в фиксированной (не зависящей от $\mu$) окрестности нуля становятся явно выраженными три области в пространстве параметров с различным поведением языков:
\begin{itemize}
	\item  \textbf{Область $A$}: $b<a-1$. Языки тонкие.

	\item \textbf{Область $B$}: $a-1<b<a+1$. Языки заполняют практически всё пространство параметров, перемычки отсутствуют.

	\item \textbf{Область $C$}: $b>a+1$. Языки образуют сетчатую (паркетную) структуру, заполняя почти всё пространство параметров, наблюдаются перемычки.
\end{itemize}
При больших $b$ границы языков перестают приближаться друг к другу, уже не образуют явно выраженной сетчатой структуры и начинают приближаться функциями Бесселя (см. раздел~\ref{subsection:Bessel}). Область $C$, таким образом, постепенно «растворяется»: интересный открытый вопрос~--- при каких значениях $b$ (в зависимости от $\mu$) это происходит: теорема \ref{My_Theorem} дает оценку снизу на область бесселевости: $b \ge \frac{c}{\mu}$.

\begin{figure}
	\begin{center}
		\includegraphics[scale=0.11]{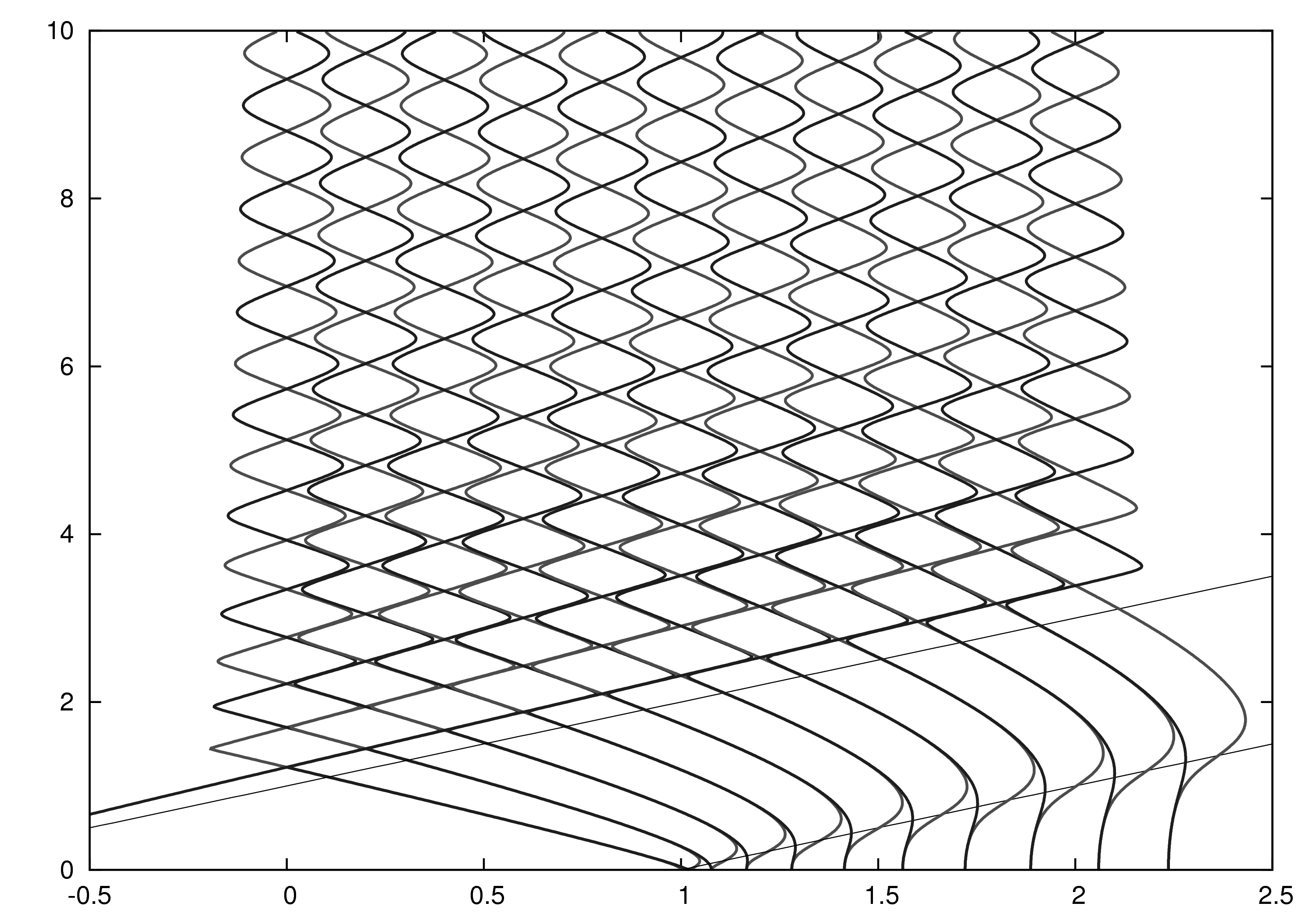}
        \caption{Линиями изображены границы зон резонансного захвата с номерами $k=1, \ldots 10$, $\mu=0.2$}\label{pic020}
        \includegraphics[scale=0.11]{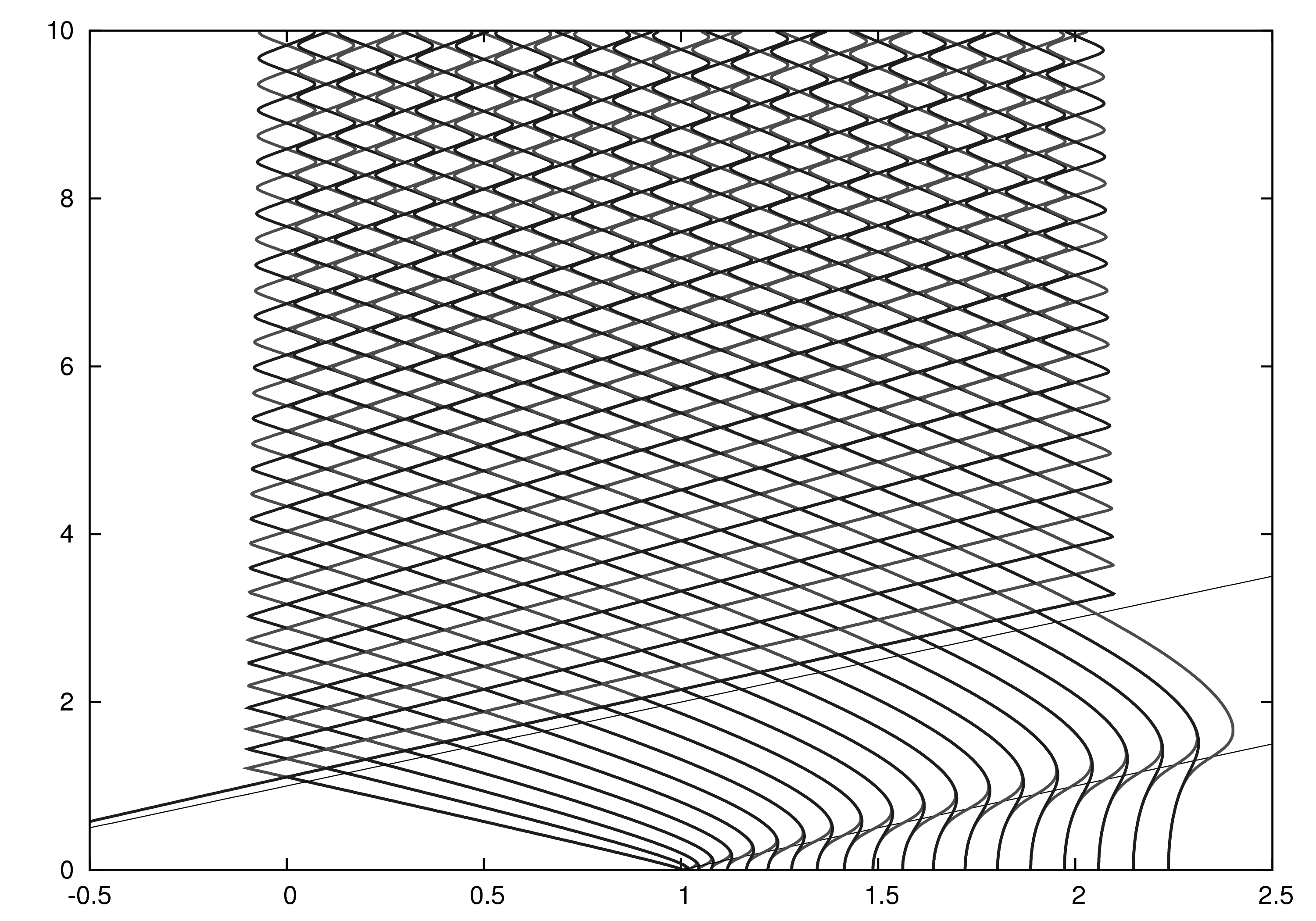}
		\caption{Здесь $\mu=0.1$ и по сравнению с $\mu=0.2$ языков на том же промежутке по $a$ становится больше: $k=1, \ldots 20$, промежутки между языками уже неразличимы на глаз}\label{pic010}
	\end{center}
\end{figure}

Структуру языков в областях $A$, $B$, $C$ можно объяснить с точки зрения теории \emph{быстро-медленных систем}. Напомним основные понятия.
\begin{definition}
	Рассмотрим семейство дифференциальных уравнений
	\begin{equation}\label{eq:slowfast}
		\begin{cases}
			\dot x=f(x,y,\eps)\\
			\dot y=\eps g(x,y,\eps),
		\end{cases}
	\end{equation}
	где $\eps\in \mathbb R_{>0}$, переменные $x$ и $y$ могут быть многомерными. Такое семейство называется \emph{быстро-медленной системой}. Переменная $x$ называется \emph{быстрой}, а переменная $y$~--- \emph{медленной}.
\end{definition}
В типичной точке фазового пространства при малых $\eps$ скорость изменения переменной $x$ много больше, чем скорость изменения переменной $y$. Это объясняет терминологию. При $\eps=0$, система~\eqref{eq:slowfast} превращается в семейство уравнений на $x$: переменная $y$ становится параметром. Такая система называется~\emph{быстрой}.
\begin{definition}
	Множество неподвижных точек
	$$M=\{(x,y)\mid f(x,y,0)=0\}$$
	быстрой системы называется~\emph{медленной поверхностью} или, в двумерном случае, \emph{медленной кривой}.
\end{definition}
Типичная траектория типичной быстро-медленной системы с одномерной быстрой и одномерной медленной переменной допускает следующее описание~\cite{MR}: это чередующиеся фазы медленного (со скоростью порядка $O(\eps)$) дрейфа вблизи медленной кривой и быстрых «срывов» (скорость порядка $O(1)$) вдоль траекторий $y=const$ быстрой системы. Срывы происходят вблизи точек, в которых касательная к медленной кривой параллельна оси быстрого движения (точек складок).

Нетрудно видеть, что семейство уравнений~\eqref{eq:Joseph_torus} можно
рассматривать как быстро-медленную систему, положив $\eps=\mu$, $g\equiv 1$.
Фактически, исследование уравнения~\eqref{eq:Joseph_torus} с точки зрения
теории быстро-медленных систем было начато в работе~\cite{GI}.

Несложным вычислением доказывается следующее утверждение о форме медленной кривой в зависимости от значений параметров (см. рис. \ref{slowcurvepic} и рис. \ref{tongueswithslowcurve}):

\begin{proposition}
В области $A$ медленная кривая системы~\eqref{eq:Joseph_torus} отсутствует; в области $B$ она имеет вид стягиваемой выпуклой кривой; имеющей ровно две точки складки, в области $C$ она распадается на пару нестягиваемых кривых с гомотопическим типом $(1,0)$, каждая из которых имеет две точки складки.
\end{proposition}

\begin{figure}
	\begin{center}
		\includegraphics[scale=0.45]{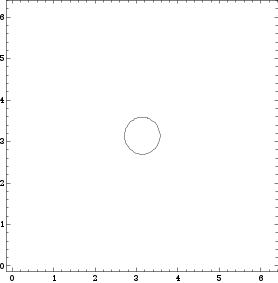}
        \includegraphics[scale=0.45]{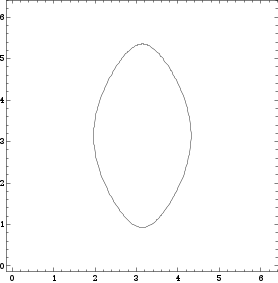}	
        \includegraphics[scale=0.45]{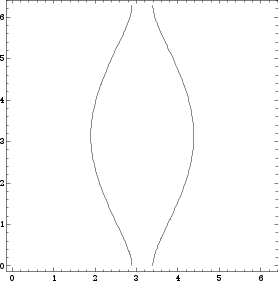}	
         \includegraphics[scale=0.45]{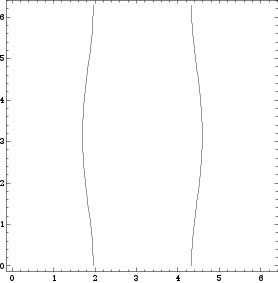}
\caption{Мультфильм об изменении медленной кривой при фиксированном $a$ и увеличении $b$: медленная кривая появляется при $b=a-1$, растет и при $b=a+1$ начинает пересекать саму себя: далее она распадается на две окружности, которые при увеличении $b$ стремятся к меридианам тора}\label{slowcurvepic}
	\end{center}
\end{figure}

\begin{theorem}\label{thm:B}
	Пусть $B'$~--- некоторое открытое ограниченное множество в пространстве параметров $(a,b)$, замыкание которого компактно вложено в область $B$. Для достаточно малого $\mu$, расстояние между соседними языками в области $B'$ не превосходит $C_1 \exp(-C_2/\mu)$ для некоторых положительных констант $C_1$, $C_2$.
\end{theorem}
\begin{proof}
Из пункта~\ref{sssec:sym} следует, что границы языков можно находить из условия, что траектория с начальным условием $(x=x_0,t=0)$ проходит через точку $(x=x_0, t=2\pi)$, где $x_0=0$ или $x_0=\pi$. На универсальной накрывающей эти условия будут иметь вид
\begin{align}\label{ga:cond:0}
	\tilde{x}_0(2\pi)&=2\pi k,\\ \label{ga:cond:pi}
	\tilde{x}_\pi(2\pi)&=\pi+2\pi k,
\end{align}
где $k\in\mathbb Z$~--- номер языка Арнольда, $\tilde{x}_0(t)$ (соответственно, $\tilde{x}_\pi(t)$)~--- фазовая кривая уравнения~\ref{eq:Joseph_torus} с начальным условием $\tilde{x}_0(0)=0$ (соответственно, $\tilde{x}_\pi(0)=\pi$), поднятая на универсальную накрывающую.

Из соображений симметрии следует, что если отображение Пуанкаре за полный период ($2\pi$) сдвигает некоторую точку на $2\pi k$ для целого $k$, то отображение Пуанкаре за половину периода будет сдвигать эту же точку на вдвое меньшую величину $\pi k$. Следовательно, условия~\eqref{ga:cond:0}-\eqref{ga:cond:pi} могут быть записаны в виде
\begin{align} \label{ga:cond:half:0}
	\tilde{x}_0(0)&=\pi k,\\ \label{ga:cond:half:pi}
	\tilde{x}_\pi(\pi)&=\pi+\pi k,
\end{align}
Это означает, что одна из границ языка с номером $k$ задаётся условием «0 переходит в 0 или $\pi$ по модулю $2\pi$», а другая~--- условием «$\pi$ переходит в $\pi$ или $0$ по модулю $2\pi$» (в зависимости от чётности $k$) за половину периода.

Заметим, что когда при непрерывном изменении параметров $a$ и $b$ значение $x_0(\pi)$ (соотв., $x_\pi(\pi)$) непрерывно меняется от $0$ до $\pi$ (соотв., от $\pi$ до $2\pi=0\pmod 2\pi$), сдвиг за половину периода увеличивается на половину оборота, а значит за полный период~--- на полный оборот, то есть число вращения увеличивается на 1. Это соответствует переходу к соседнему языку.

Пусть параметры $(a_0,b_0)\in B'$ лежат на границе языка. Без ограничения общности, можно считать, что при этом выполняется условие «0 переходит в $\pi$» (другие условия рассматриваются аналогично).

Рассмотрим дугу $J^u=[(\pi,\pi),(2\pi,\pi)]\subset\{t=\pi\}$, содержащую точку $x=3\pi/2$. Она пересекает отталкивающую часть медленной кривой. Обратим время: отталкивающая часть станет притягивающий. Образ $D$ дуги $J^u$ под действием отображения Пуанкаре с трансверсали $t=\pi$ на трансверсаль $t=0$ в обратном времени имеет длину $O(\exp(-C/\mu))$ для некоторого $C>0$. Это следует из того факта, что при движении вблизи \emph{устойчивой} части медленной кривой траектории быстро-медленной системы экспоненциально притягиваются друг к другу, подробное доказательство см. в~\cite[Proposition 4]{GI}, точную оценку для $C$ см. в~\cite[Лемма 5.4]{Ilemma}. Мы предположили, что выполняется условие «0 переходит в $\pi$» и значит нижний конец $J^u$ попадает в 0. Следовательно, $D=[0,\xi]$, где $\xi=O(\exp(-C/\mu))$.

Нетрудно показать, что производная решения по параметрам $a$ и $b$ в области $t\in [0,\pi]$ отделена от нуля (и на самом деле имеет порядок $O(1/\mu)$). Следовательно, изменяя параметр $a$ или $b$ на величину порядка $O(\exp(-C/\mu))$, можно перевести верхний конец отрезка $D$ в $0$. При этом $x_0(\pi)$ непрерывно сдвигается от $\pi$ до $2\pi$, что соответствует увеличению числа вращения на 1, то есть переходу на границу соседнего языка.
\end{proof}
\begin{theorem}\label{thm:C}
	Пусть $C'$~--- некоторое открытое ограниченное множество в пространстве параметров $(a,b)$, замыкание которого компактно вложено в область $C$. Для достаточно малого $\mu$ расстояние между соседними языками в области $C'$ не превосходит $C_1 \exp(-C_2/\mu)$ для некоторых положительных констант $C_1$, $C_2$.
\end{theorem}
\begin{proof}
	Основные соображения аналогичны тем, что использовались в доказательстве теоремы~\ref{thm:B}.
	Пусть параметры $(a_0,b_0)\in B'$ лежат на границе языка. Без ограничения
	общности, можно считать, что при этом выполняется условие «0 переходит в
	$\pi$» (другие условия рассматриваются аналогично).

	Рассмотрим трансверсаль $\Gamma=\{t=\alpha\}$, где $\alpha\in [0,\pi]$ выбрано таким образом, чтобы $\Gamma$ пересекала медленную кривую в двух точках (была отделена от точек складок). Разобьем $\Gamma$ на два полуинтервала, один из которых ($J^s$) пересекает устойчивую часть медленной кривой, а другой ($J^u$)~--- неустойчивую:
	\begin{gather*}
		J^s=\{(x,t)\mid t=\alpha,\ x\in[0,\pi)\}\subset \Gamma,\\
		J^u=\{(x,t)\mid t=\alpha,\ x\in[\pi,2\pi)\}\subset \Gamma.
	\end{gather*}
	Обозначим также через $D^u$ образ полуинтервала $J^u$ под действием отображения Пуанкаре с трансверсали $\Gamma$ на трансверсаль $t=0$ в обратном времени, а через $D^s$~--- образ полуинтервала $J^s$ под действием отображения Пуанкаре с трансверсали $\Gamma$ на трансверсаль $t=\pi$ в прямом времени. Отрезки $D^u$ и $D^s$ экспоненциально узкие по соображениям, обсуждавшимся выше (см. доказательство теоремы~\ref{thm:B}).

	Рассмотрим траекторию $x(t)$ (соотв., $x_\pi(t)$), проходящую через точку $(0,0)$ (соотв., $(\pi,0)$). Возможны два случая:

	1. Точка $0$ лежит в полуинтервале $D^u$, а значит $x_0(\alpha)\in J^u$.

	2. Точка $0$ не лежит в полуинтервале $D^u$, а значит $x_0(\alpha)\in J^s$ и $x_0(\pi)=\pi \in D^s$ (мы предположили, что это выполняется условие «0 переходит в $\pi$»).

	Предположим, что имеет место случай 2. Пусть $\pi \not\in D^u$ и значит $x_\pi(\alpha)\in J^s$ и $x_\pi(\pi)\in D^s$. В этом случае расстояние между $\pi=x_0(\pi)$ и $x_\pi(\pi)$ экспоненциально мало, и экспоненциально малым изменением параметров можно добиться выполнения условия «$\pi$ переходит в $\pi$». При этом число вращения увеличится или уменьшится на 1.

	Если $\pi\in D^u$, можно экспоненциально мало пошевелить один из параметров $a$ или $b$, чтобы это условие нарушилось.

	Случай 1 рассматривается аналогично, с заменой индексов $u$ на $s$ и наоборот.
\end{proof}
\begin{remark}
	Из доказательства теоремы~\ref{thm:C} следует, что любая точка, лежащая на границе языка, удовлетворяет одному из условий: $\{0,\pi\} \cap D^u\ne \varnothing$ или $\{0,\pi\}\cap  D^s\ne \varnothing$. Эти условия задают два семейства экспоненциально узких трубок в пространстве параметров, внутри которых лежат границы языков. Эти трубки образуют сетчатую структуру, которую можно видеть в области $C$ в численных экспериментах.

При движении вдоль границы языка, соответствующая характеристическая траектория проходит вблизи устойчивой или неустойчивой части медленной кривой, в зависимости от того, какой из случаев 1 или 2 реализуется. Это соответствует движению «влево» или «вправо». В тот момент, когда граница совершает «поворот», траектория проводит сравнимое время вблизи устойчивой и неустойчивой части медленной кривой. Такие решения называются «уточными» (см. \cite{AAIS}).
\end{remark}

\begin{figure}
	\begin{center}
		\includegraphics[scale=0.15]{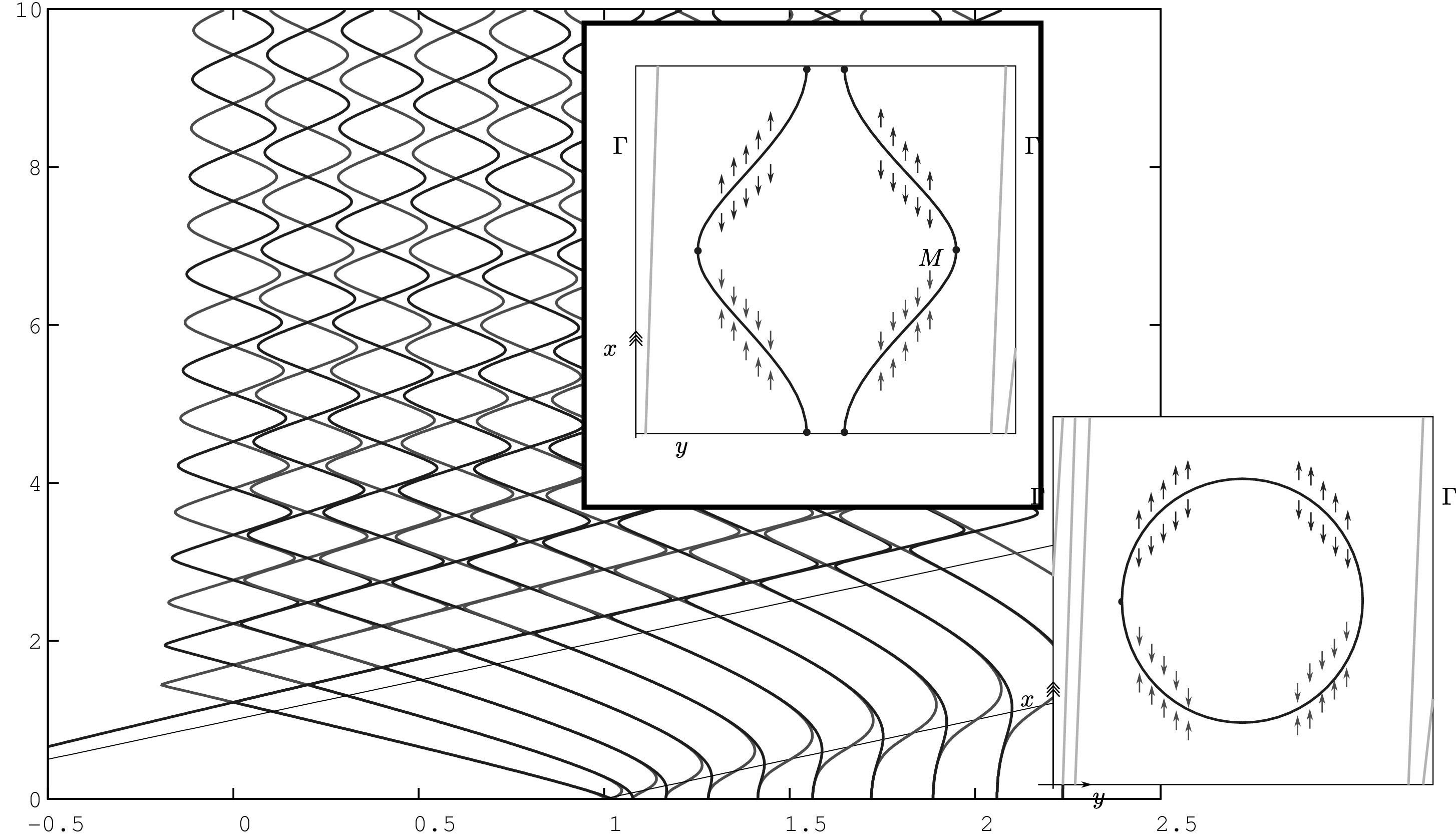}
\caption{Области $B$ и $C$ соответствуют непустым медленным кривым: в них применяется техника теории быстро-медленных систем}\label{tongueswithslowcurve}
	\end{center}
\end{figure}

\subsection{Моделирование границ: метод Ньютона}\label{method}
Учитывая наличие физических приложений, актуальной является задача построения языков Арнольда для уравнения~\eqref{eq:Joseph_torus} с помощью численных методов. В общем случае, такая задача вычислительно сложна: для нахождения числа вращения по формуле~\eqref{eq:rho} требуется численно интегрировать уравнение~\eqref{eq:Joseph_torus} на длительных промежутках времени. Так, чтобы найти число вращения с точностью $\pm \eps$, требуется проинтегрировать уравнение на промежутке времени порядка $(\eps\mu)^{-1}$. Для построения языков Арнольда требуется находить значение числа вращения на достаточно густой сетке в пространстве параметров.

Однако, используя свойства уравнения~\eqref{eq:Joseph_torus}, описанные в разделе~\ref{ss:prop}, можно предложить гораздо более эффективный алгоритм построения языков. Его описанию будет посвящена оставшаяся часть настоящего раздела.

Напомним, что границы языков Арнольда определяются условиями на образы точек 0 и $\pi$ под действием отображения Пуанкаре за половину периода (см.~\eqref{ga:cond:half:0}--\eqref{ga:cond:half:pi}). Рассмотрим условие «0 переходит в 0» (и значит $k=2l$ чётно); остальные условия рассматриваются аналогично. Пусть $x=x_0(t;a,b,\mu)$ задаёт фазовую кривую, проходящую через точку $(0,0)$.

Зафиксируем некоторое $\mu$ и положим
\begin{equation}
	Q(a,b)=x_0(\pi;a,b,\mu)
\end{equation}
Нас интересует $\pi k$-линия уровня функции $Q(a,b)$:
$$L_k:=\{(a,b)\mid Q(a,b)=\pi k=2\pi l\}.$$
Пусть граница $L_k$ задаётся графиком функции $a=a(b)$. При $b=0$, уравнение~\eqref{eq:Joseph_torus} интегрируется в квадратурах, и значение $a(0)$ можно получить явно:
\begin{equation}
	a(0)=\sqrt{1+l^2\mu^2}
\end{equation}
	Пусть найдено значение $a_0=a(b_0)$ для некоторого $b_0$. Возьмем некоторый маленький шаг $h$ и найдём приблизительно значение $a(b_0+h)$. Это означает, что нам нужно решить уравнение
	\begin{equation}\label{eq:eq}
	Q(a,b_0+h)-\pi k=0
\end{equation}
	относительно $a$. В качестве нулевого приближения положим $a=a_0$.
	Рассмотрим систему
	\begin{equation}\label{eq:var}
	\left\{
	\begin{array}{l}
	x'=\cos x +a + b \cos t,\\
	t'=\mu,\\
	u'=-u\sin x+1.
	\end{array}
	\right.
	\end{equation}
	Третье уравнение является уравнением в вариациях по параметру, $u=\frac{\partial x}{\partial a}$.
		
	Путём численного интегрирования системы~\eqref{eq:var}, найдём $Q_0=Q(a_0,b_0+h)$ и $Q'=\frac{\partial Q}{\partial a}|_{a_0,b_0+h}$. Заменяя $Q$ как функцию от $a$ на касательную в точке $a_0$ (то есть применяя один шаг метода Ньютона для нахождения корня уравнения~\eqref{eq:eq}), находим в качестве первого приближения для $a$:
	\begin{equation}
		a_1=a_0-(Q_0-\pi k)/Q'.
	\end{equation}
	После нахождения $a_1$, делаем замену $b_0+h\mapsto b_0$, $a_1\mapsto a_0$ и повторяем процедуру. Таким образом, можем найти $a(b)$ для любых значений $b$ на сетке с шагом $h$.
	
	Несмотря на то, что на каждом шаге по $b$ мы делаем только один шаг метода Ньютона, погрешность не накапливается: по индукции легко доказать, что на каждом шаге погрешность нулевого приближения составляет $O(h)$, а погрешность первого приближения составляет $O(h^2)$, что в свою очередь гарантирует, что погрешность нулевого приближения на следующим шаге составляет $O(h^2+h)=O(h)$ и т.д.

	Описанный алгоритм эффективно работает при $\mu$ порядка $1$, однако при малых значениях $\mu$ (порядка $0.1$) возникают проблемы со сходимостью метода Ньютона. Дело в том, что рассматриваемая траектория в этом случае проходит вблизи \emph{отталкивающей} части медленной кривой $M$. При этом накапливается большая производная по начальному условию и параметрам, что приводит к вычислительной неустойчивости метода.

	При нахождении в области $B$ данная проблема решается путём рассмотрения отображения $Q^{-1}$ вместо $Q$ (то есть интегрирования системы~\eqref{eq:var} в обратном времени). В этом случае рассматриваемая траектория будет проходить вблизи притягивающей части медленной кривой и описанные проблемы не возникают.

	В то же время, в области $C$ данный приём не срабатывает: при прохождении границы вблизи точки перемычки соседнего языка рассматриваемая траектория является~\emph{уточной}, то есть в прямом и обратном времени проходит вблизи отталкивающей части медленной кривой. Для решения этой проблемы используется адаптивный алгоритм, совершающий несколько шагов метода Ньютона, и в случае отстуствия сходимости использующий более устойчивый (хотя и менее эффективный) метод бисекции отрезка.

	С помощью описанного адаптивного алгоритма удаётся получать границы языков Арнольда для малых значений параметра $\mu$ вплоть до $\mu=0.01$. Следует отметить, что ранее в литературе были описаны алгоритмы, позволяющие получать языки Арнольда лишь для $\mu$ порядка $0.1$ (см.~\cite{L,BKT-junc}).
	
	\section{Благодарности}
	Результаты настоящей работы были получены в ходе многочисленных обсуждений в семинаре по динамическим системам под руководством Ю.~С.~Ильяшенко. На уравнение~\eqref{eq:Joseph} обратил наше внимание В.~М.~Бухштабер. Он также рассказал о задачах, связанных с этим уравнениям и интересных с физической точки зрения. А.~В.~Клименко заметил, что соображения симметрии мгновенно дают простые условия на границы языков в терминах решений, проходящих через $0$ или $\pi$. (Это было также обнаружено В.~М.~Бухштабером.) Эволюцию этих решений при движении вдоль границы языка и появление уточных траекторий в ходе численных экспериментов проследил Д.~А.~Филимонов. А.~А.~Глуцюк доказал целочисленность абсцисс точек перемычек. Мы выражаем сердечную признательность всем коллегам, без которых эта работа была бы невозможной.

	Все авторы поддержаны грантами РФФИ и РФФИ-CNRS: RFBR $10$-$01$-$00739$-a ; RFBR/CNRS $10$-$01$-$93115$-НЦНИЛ а.

В данной статье использованы результаты, полученные в ходе выполнения проекта №11-01-0239 <<Инвариантные многообразия и асимптотическое поведение быстро-медленных отображений>> в рамках программы «Научный фонд НИУ ВШЭ» в 2012/2013 гг.

И.Щуров также поддержан грантом Президента РФ МК-2790.2011.1: «Топологические и символические модели для одномерных и двумерных динамических систем» (2011~--- 2012)

\end{document}